\documentclass[11pt,leqno]{article}
\usepackage{amsmath}
\usepackage{verbatim}
\usepackage{amsfonts}
\usepackage{amssymb}
\usepackage{mathrsfs}
\usepackage{amsthm}
\usepackage{mathtools}
\usepackage{color} 

\usepackage{amsxtra}
\usepackage[greek,english]{babel}

\newtheorem{theorem}{Theorem}[section]
\newtheorem{corollary}{Corollary}[section]

\newtheorem{remark}{Remark}[section]
\newtheorem{definition}{Definition}[section]

\usepackage{chngcntr}
\counterwithin*{equation}{section}

\begin{document}

\title{
Approximations with Non-Symmetric Green's Kernels and their Application to Fractional Differential Equations
}
\date{}
\author
{ Nick Fisher
\thanks{Department of Mathematics and Statistics,
Portland State University, Portland, OR
E-mail:{\it nicholfi@pdx.edu}
}}
\maketitle

\begin{abstract}
Several kernel-based methods for the numerical solution of fractional differential equations have been developed in the recent past; however, these techniques exclusively relied on the use of radial basis function approximations. In the present work, we consider the non-symmetric Green's kernel perspective on fractional order spline interpolation and its application to a kernel Galerkin method for the numerical solution of certain fractional order differential equation. Unfortunately, the reliance on a non-symmetric kernel requires that our theoretical analysis of the kernel interpolants must take place outside the familiar setting of reproducing kernel Hilbert spaces. Nevertheless, we are able to prove that the proposed kernel interpolants obtain optimal order convergence rates in a reproducing kernel Banach space.
\end{abstract}

{\bf Key words.} 
fractional derivatives, Green's kernels, interpolation, approximation order, reproducing kernel Banach spaces

{\bf AMS subject classification.} 41A15, 41A65, 65D05


\section{Introduction} 
The present work is concerned with the fractional Green's kernel approach to scattered data interpolation and its application to the numerical solution of fractional differential equations. That is, given data $\{( x_i, y_i )\}_{i = 1}^N$ with $ x_i \in \mathbb{R}$, $y_i \in \mathbb{R}$, we would like to find a continuous function $s$ such that $s(x_i) = y_i$, $i = 1, \dots, N$. In particular, we would like to express the interpolant as a linear combination of  basis functions $\{B_j\}_{j = 1}^N$, so that
\[s( x)  = \sum_{j = 1}^Nc_jB_j( x), \quad  x \in \mathbb{R}.\]
We will choose a basis of the form
\[\mathcal B = \{K(\cdot, x_1),\dots,K(\cdot,x_N) \}\]
so that our interpolant can be expressed as
\begin{equation}\label{eq: general interpolant}
    s(x)  = \sum_{j = 1}^Nc_jK(x,  x_j) = \bold k( x)^T\bold c, \quad  x \in \mathbb{R}.
\end{equation}
Then, the coefficients $c_j$ are obtained by solving the linear system
\[\text{K}\bold c = \bold y\]
with $(\text{K})_{i,j} = K( x_i,  x_j)$, $i,j = 1, \dots, N$. In other words $\bold c = \text{K}^{-1}\bold y$ and the kernel interpolant can be evaluated as
\[s(x) = \bold k( x)^T\text{K}^{-1}\bold y,\]
where $\bold k( x) = \left( K( x,x_1),\dots,K( x, x_N) \right)$. The existing literature on kernel methods offers a wide assortment of choices for the kernel $K( x, z)$ \cite{Fass}. This allows practitioners great flexibility in dealing with large data sets in a mesh-free setting while simultaneously allowing for great specificity in the analytical properties of the resulting interpolant.
For instance, one may choose from periodic kernels \cite{Wahba}, discontinuous kernels \cite{DeMarchi}, or divergence-free kernels \cite{Fus} among many other possibilities. 
In recent years, many approaches to fractional order approximation have been proposed, such as fractional B-splines \cite{Unser} and spectral collocation methods \cite{ZayKar}. Additionally, kernel methods for fractional differential equations were studied in \cite{Lischke, Moh, PiretHanert}, but this research was largely based on applications of radial basis functions. With all this in mind, we introduce the Green's kernel perspective on fractional order spline interpolation by building on previous research regarding integer order Brownian bridge kernels (see Chapter 7 in \cite{Fass}, for instance) in reproducing kernel Hilbert spaces (RKHS's). Owing to the fact that the proposed kernels are not symmetric (see Section 2.2 for details), much of the theoretical machinery regarding reproducing kernel Hilbert spaces is not applicable. Consequently, we give some cursory details on the broader theory of reproducing kernel Banach spaces (RKBS's) \cite{XuYe} and hope to place the proposed methods on solid theoretical ground. We aim to show that the new fractional Green's kernel techniques provide unique perspective for understanding the connection between RKHS's and RKBS's as the fractional kernels contain as special cases the well understood integer order Brownian bridge kernels. Finally, to establish theoretical bounds on the errors in fractional order Green's kernel interpolation, we investigate the current literature on fractional order sampling inequalities \cite{Arcangeli,Corrigan}. The numerical results are encouraging.

As an application of the proposed interpolation methods, we consider the kernel Galerkin discretization of certain fractional differential equations. Of particular interest is the fractional diffusion equation of the form 
\begin{equation}\label{eq: frac diff 1}
    \frac{\partial u(x,t)}{\partial t} - d(x) {}_0D_x^\alpha u(x,t)  = q(x,t), \quad x \in (0,1),\, t>0,
\end{equation} 
\begin{equation} u(x,0) = g(x), \quad x \in (0,1),\end{equation} 
\begin{equation}\label{eq: frac diff 3}u(0,t) = u(1,t) = 0, \quad t \geq 0,\end{equation} 
where ${}_0D_x^\alpha$ is the left Riemann-Liouville operator of order $\alpha$ based at $0$ (to be defined shortly). Equations \eqref{eq: frac diff 1}-\eqref{eq: frac diff 3} have been solved by a variety numerical of methods, including finite difference \cite{Fisher,Tadjeran,Meerschaert,Liu1,Liu2,Lynch,TMS}, finite element \cite{Ervin,Roop}, and spectral/pseudospectral methods \cite{PiretHanert, ZayKar}. Recently, the kernel Galerkin approach was applied to the solution of the fractional diffusion equation in \cite{Moh}. A key contrast with the present work is that the numerical methods considered in \cite{Moh} involved Green's functions of the \emph{self adjoint} fractional Laplacian. Whereas, in the proposed method, the discretization of equation \eqref{eq: frac diff 1} involves the Green's function of the \emph{non}-self adjoint Riemann-Liouville derivative. It is because of this distinction between the self adjoint derivative operators in \cite{Moh} and non-self adjoint operators in \eqref{eq: frac diff 1} that necessitates the move from the RKHS setting to the RKBS setting, since we do not expect the Green's function of the non-self adjoint Riemann-Liouville operator to be symmetric.

The remainder of the paper is organized as follows. Sections 2 and 3 provide relevant background information of fractional calculus and reproducing kernel Banach spaces, respectively. In Section 4, the approximation orders of the proposed interpolations method are derived. The kernel Galerkin methods for fractional differential equations are described in Section 5 while numerical results are listed in Section 6. Finally, Section 7 contains some concluding remarks. 

\section{Fractional Calculus}

\subsection{Some function spaces}
We begin with a review of some function spaces which are frequently used in the analysis of problems involving fractional order differential operators. Throughout we assume $\Omega = (a,b) \subset \mathbb{R}$ and $\partial \Omega = \{a,b\}$ so that $\overline \Omega  = \Omega \cup \partial \Omega = [a,b]$.\\

The Lebesgue space, $L^p(\Omega)$, for $1\leq p \leq \infty$, consists of the set of functions $v: \Omega \rightarrow \mathbb{R}$ with a finite norm $\|v\|_{L^p(\Omega)}$ defined by
\[ \|v\|_{L^p(\Omega)} = \left\{ \begin{array}{cc}
    \left(\int_\Omega|v|^pdx\right)^{1/p}, & \text{if } 1\leq p < \infty \\
    \text{ ess } \sup_\Omega |p|, & \text{if } p = \infty \
\end{array}\right. .\]
Additionally, for two functions $u$ and $v$ defined over $\Omega$, we denote
\[\langle u , v \rangle = \int_\Omega uv \, dx\]
when the domain of integration is understood from context.
Let $s\in \left (\tfrac{1}{2},1 \right )$, then the fractional Sobolev space $W_p^s(\Omega)$ is defined by
\[W_p^s(\Omega) = \left \{v \in L^p(\Omega) :  |v|_{W_p^s(\Omega)} < \infty \right\},\]
where the Sobolev-Slobodekij seminorm $|\cdot|_{W_p^s (\Omega)}$ is defined by
\[|v|_{W_p^s (\Omega)} = \left(\int_{\Omega}\int_{\Omega} \frac{(v(x) - v(y))^p}{|x-y|^{d+ps}}dx \, dy\right)^{1/p}.\]
Note that the space $W_p^s(\Omega)$ is a Banach space when it is equipped with the norm
\[\|v\|_{W_p^s(\Omega)} = \left( \| v\|^p_{L^p(\Omega)}  +|v|^p_{W_p^s} (\Omega) \right)^{1/p}.\]
Next, we define the Hilbert spaces
\[H^s(\Omega) = W_2^s(\Omega) \text{ and } H_0^s(\Omega) = \left\{v \in H^s(\Omega) : \left.v\right|_{\partial \Omega} = 0 \right\}.  \]
The space of absolutely continuous functions $AC^n(\overline \Omega)$ is defined as follows. A function $v: \Omega \rightarrow \mathbb{R}$ is called absolutely continuous on $\overline \Omega$ if for any $\epsilon> 0$, there exists a $\delta > 0$ such that for any finite set of pairwise disjoint intervals $[a_k,b_k] \subset \overline \Omega, $ $k = 1, \dots, n$, such that $\sum_{k= 1}^n (b_k - a_k) < \delta$, there holds the inequality
\[\sum_{k = 1}^n |v(b_k) - v(a_k))| < \epsilon . \]
The space of these functions is denoted by $AC(\overline \Omega)$. Furthermore, $AC^n(\overline \Omega)$, $n \in \mathbb{N}$, denotes the space of functions $v$ with continuous derivatives up to order $n-1$ in $\overline \Omega$ and the $(n-1)^{st}$ derivative $v^{(n-1)}\in AC(\overline \Omega)$.

\subsection{Fractional derivatives}
With the above definitions in mind, we mention some basic definitions of fractional order derivatives and their properties \cite{Jin, Pod, Samko}. In particular, there are many possible definitions of the fractional derivative. We mention a few popular choices beginning with the Riemann-Liouville derivative.
\begin{definition}
    For $f \in L^1(\Omega)$ and $n-1 < \alpha \leq n$, $n \in \mathbb{N}$, its left-sided Riemann-Liouville fractional derivative of order $\alpha$ (based at $x = a$), denoted by ${}_a D_x^\alpha f $, is defined by 
    \[{}_a D_x^\alpha f(x) = \frac{1}{\Gamma(n-\alpha)}\frac{d^n}{dx^n}\int_a^x(x-\tau)^{n -\alpha-1}f(\tau)d\tau,  \]
    (provided the integral exists) and its right-sided Riemann-Liouville fractional derivative of order $\alpha$ (based at $x = b$), denoted by ${}_x D_b^\alpha f $, is defined by 
    \[{}_x D_b^\alpha f(x) = \frac{(-1)^n}{\Gamma(n-\alpha)}\frac{d^n}{dx^n}\int_x^b(\tau-x)^{n -\alpha-1}f(\tau)d\tau,  \]
    (provided the integral exists).
\end{definition}
Next, the Caputo derivative.
\begin{definition}
    For $f \in L^1(\Omega)$ and $n-1 < \alpha \leq n$, $n \in \mathbb{N}$, its left-sided Caputo fractional derivative of order $\alpha$ (based at $x = a$), denoted by ${}^C_a D_x^\alpha f $, is defined by 
    \[{}^C_a D_x^\alpha f(t) = \frac{1}{\Gamma(n-\alpha)}\int_a^x(x-\tau)^{n -\alpha-1}f^{(n)}(\tau)d\tau,  \]
    (provided the integral exists) and its right-sided Caputo fractional derivative of order $\alpha$ (based at $x = b$), denoted by ${}^C_x D_b^\alpha f $, is defined by 
    \[{}^C_x D_b^\alpha f(x) = \frac{(-1)^n}{\Gamma(n-\alpha)}\int_x^b(\tau-x)^{n -\alpha-1}f^{(n)}(\tau)d\tau,  \]
    (provided the integral exists).
\end{definition}

\subsection{Fractional Green's kernels}
We begin this section with a review of the traditional approach to Green's kernels.
\begin{definition}
Given a linear (ordinary or partial) differential operator $\mathcal{L}$ on the domain 
$\Omega \subseteq \mathbb{R}^d$, the Green's kernel $G$ of $\mathcal{L}$ is defined as 
the solution of 
\[\mathcal{L}G(\bold x,\bold z) = \delta(\bold x - \bold z), \quad \bold z \in \Omega \text{ fixed}.\]
Here $\delta(\bold x -\bold z) $ is the \emph{Dirac delta functional} evaluated at $\bold x - \bold z$, i.e., $\delta(\bold x - \bold z) = 0$ for $\bold x \neq \bold z$ and $\int_\Omega \delta (\bold x) d \bold x = 1$.
In what follows, we often use the compact notation $\delta_{\bold z}(\bold x) \coloneqq \delta(\bold x -\bold z)$.
\end{definition}
In particular, if $G$ and $\mathcal{L}$, defined above, satisfy $\mathcal{L}u = f$, then
\[u(\bold x) = \int_\Omega G(\bold x, \bold z)f(\bold z) d\bold z.\]

As an example, consider the Brownian bridge kernel which is the Green's kernel of the two-point boundary value problem 
\[
\left\{
\begin{array}{rcl}
     -u''(x) & = & f(x), \quad \text{in }\Omega = (0,1), \\
       u(0) & = & u(1) = 0, \\
\end{array} 
\right.
\]
given by
\[G(x,z)  = \left\{
\begin{array}{ll}
     (1-z)x & 0\leq x \leq  z \leq 1 \\
      -z(x-1) & 0 \leq z \leq x \leq 1 \\
\end{array} 
\right. \]
or
\begin{equation}\label{eq: bb kernel}
G(x,z) = \min(x,z) - xz.
\end{equation}
This choice of kernel and the corresponding approximation methods have been well studied \cite{Cavoretto,Fass}. In particular, it can be shown that the Green's kernel \eqref{eq: bb kernel} is the reproducing kernel of the Hilbert space $H_0^1([0,1])$. Hence, generalizations of two-point boundary value problems of this form will make an excellent starting place for the study of fractional Green's kernels and their approximation properties. A plot of the kernel evaluated at equally spaced points on $[0,1]$ is given in Figure \ref{fig:bb}.

\begin{figure}[ht]
\centering
\includegraphics[width=.5\textwidth]{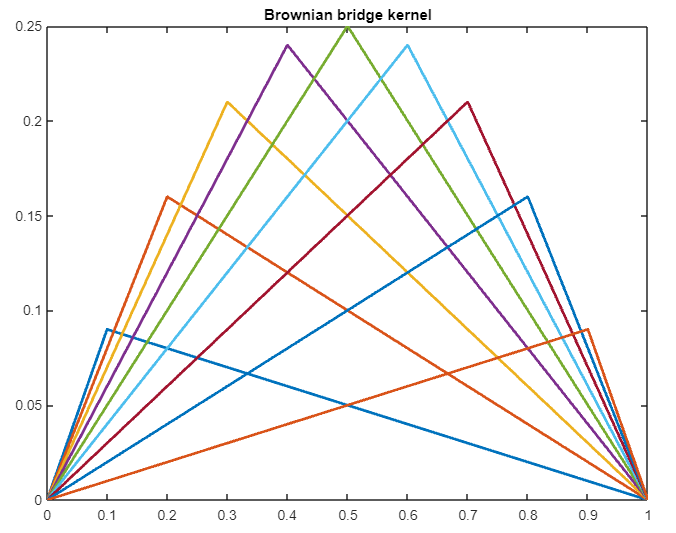}
\caption{The Brownian bridge kernel evaluated at equally spaced points on $[0,1]$.}
\label{fig:bb}
\end{figure}

What follows are two examples of fractional Green's kernels. Each kernel can be thought of as a fractional generalization of the Brownian bridge kernel. 
First, the Riemann-Liouville Green's function. 
\begin{theorem}\label{thm:RL Green's function}
Let $f:\Omega = (0,1)\times\mathbb{R} \to \mathbb{R}$ be such that $f(x,u(x)) \in L^1(\Omega)$ for 
$u\in L^1(\Omega)$. Then a function $u\in L^1(\Omega)$ with ${}_0 D_t^{-(\alpha-2)} u \in AC^2(\Omega)$
solves
\[
\left\{
\begin{array}{rcl}
     -{}_0 D_x^{\alpha} u(x) & = & f(x,u(x)), \quad \text{in }\Omega \\
      \left({}_0 D_x^{\alpha-2} u\right)(0) & = & u(1) = 0 \\
\end{array} 
\right.
\]
if and only if $u \in L^1(\Omega) $ satisfies $u(x) = \int_0^1G(x,z)f(z,u(z))dz$
where the Green's function $G(x,z)$ is given by
\[
G(x,z) = \frac{1}{\Gamma(\alpha)}\left\{
\begin{array}{rl}
     (x(1-z))^{\alpha - 1} - (x-z)^{\alpha - 1},&   0\leq z\leq x \leq 1  \\
      x^{\alpha - 1}(1-z)^{\alpha - 1}, &  0\leq x\leq z \leq 1 \\
\end{array} 
\right. .
\]
\end{theorem}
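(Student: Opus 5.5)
We work with $1<\alpha\le 2$ (so $n=2$), write $I^\beta = {}_0D_x^{-\beta}$ for the Riemann--Liouville integral of order $\beta>0$, and prove the two implications separately.

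\emph{Necessity.} Suppose $u$ solves the boundary value problem, and put $h(x)=f(x,u(x))\in L^1(\Omega)$. The standard composition rule for Riemann--Liouville operators (see \cite{Pod, Samko}) applies because $I^{2-\alpha}u\in AC^2$. It gives
\[ I^\alpha\, {}_0D_x^\alpha u(x) = u(x) - c_1 x^{\alpha-1} - c_2 x^{\alpha-2}, \]
with constants
\[ c_1=\frac{(I^{2-\alpha}u)'(0)}{\Gamma(\alpha)}, \qquad c_2=\frac{(I^{2-\alpha}u)(0)}{\Gamma(\alpha-1)}. \]
The condition $({}_0D_x^{\alpha-2}u)(0)=(I^{2-\alpha}u)(0)=0$ forces $c_2=0$. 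Applying $I^\alpha$ to the equation $-{}_0D_x^\alpha u = h$ therefore yields
\[ u(x) = c_1x^{\alpha-1} - \frac{1}{\Gamma(\alpha)}\int_0^x (x-z)^{\alpha-1}h(z)\,dz. \]
Next impose $u(1)=0$:
\[ c_1=\frac{1}{\Gamma(\alpha)}\int_0^1(1-z)^{\alpha-1}h(z)\,dz. \]
Substituting this back and splitting $\int_0^1=\int_0^x+\int_x^1$, the first term becomes $\frac{1}{\Gamma(\alpha)}\int_0^1 x^{\alpha-1}(1-z)^{\alpha-1}h(z)\,dz$. On $z\le x$ it combines with $-(x-z)^{\alpha-1}$, and on $z\ge x$ it stands alone. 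This is exactly $\int_0^1G(x,z)h(z)\,dz$ with the stated piecewise $G$.

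\emph{Sufficiency.} Suppose $u=\int_0^1 G(x,z)h(z)\,dz$. Rewrite this as
\[ u = c_1x^{\alpha-1}-I^\alpha h, \qquad c_1=\frac{1}{\Gamma(\alpha)}\int_0^1 (1-z)^{\alpha-1}h(z)\,dz. \]
Since $h\in L^1$ and $\alpha>1$, both terms lie in $L^1$. For the first term, the power rule gives $I^{2-\alpha}x^{\alpha-1}=\Gamma(\alpha)\,x$, which is $AC^2$ and vanishes at $0$. For the second, the semigroup property gives $I^{2-\alpha}I^\alpha h = I^2h$, which is $AC^2$ with $I^2h(0)=0$. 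Hence $I^{2-\alpha}u\in AC^2$ and $({}_0D_x^{\alpha-2}u)(0)=0$.

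Differentiating twice gives ${}_0D_x^\alpha x^{\alpha-1}=0$ and ${}_0D_x^\alpha I^\alpha h = (I^2h)''=h$ almost everywhere. So $-{}_0D_x^\alpha u=h$. Finally, $G(1,z)=\frac{1}{\Gamma(\alpha)}\bigl[(1-z)^{\alpha-1}-(1-z)^{\alpha-1}\bigr]=0$, so $u(1)=0$.

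\emph{Main obstacle.} The delicate step is justifying the composition formula $I^\alpha\,{}_0D_x^\alpha u = u - c_1x^{\alpha-1}-c_2x^{\alpha-2}$ under only the stated regularity, $u\in L^1$ with $I^{2-\alpha}u\in AC^2$. I would take it from the standard theory (Samko et al.) rather than reprove it. If needed, it can be derived by writing $I^{2-\alpha}u = I^2 w + a + bx$ with $w=(I^{2-\alpha}u)''\in L^1$. One then applies $I^{\alpha-1}$ and differentiates once, using the identity $D\,I^{\alpha-1}I^{2-\alpha} = D\,I^1 = \mathrm{id}$ on $L^1$ together with the power rule for $I^{\alpha-1}$.
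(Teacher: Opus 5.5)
Your proof is correct. You use the composition rule for $I^\alpha\,{}_0D_x^\alpha$, remove the $x^{\alpha-2}$ term via $(I^{2-\alpha}u)(0)=0$, fix $c_1$ from $u(1)=0$, and check the converse with $I^{2-\alpha}x^{\alpha-1}=\Gamma(\alpha)x$ and $I^{2-\alpha}I^\alpha=I^2$; this is the standard argument behind Theorem 5.1 of \cite{Jin}, which the paper cites in place of giving any proof of its own.
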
  
\begin{proof}
    This is Theorem 5.1 in \cite{Jin}.
\end{proof}
\begin{remark}
    The Neumann-type boundary condition $\left({}_0 D_x^{\alpha-2} u\right)(0) = 0$ of Theorem \ref{thm:RL Green's function} may evidently be replaced by the Dirichlet boundary condition $ u(0) = 0$ \cite{Jin}.
\end{remark}

\begin{figure}
\centering
\includegraphics[width=.4\textwidth]{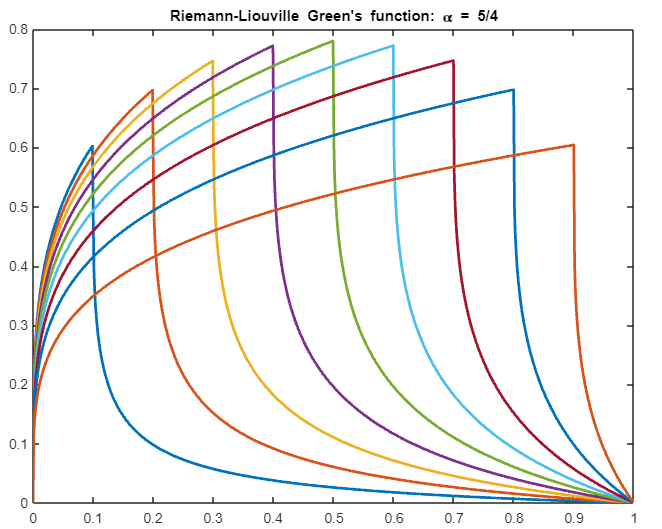}
\includegraphics[width=.4\textwidth]{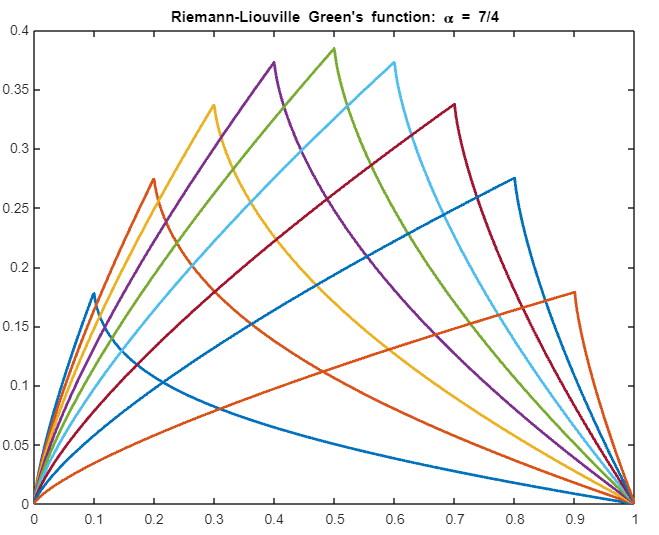}\\
\includegraphics[width=.4\textwidth]{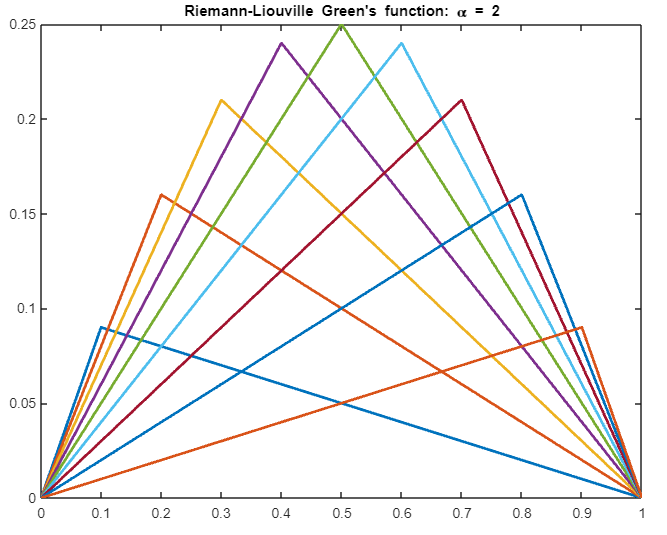}
\includegraphics[width=.4\textwidth]{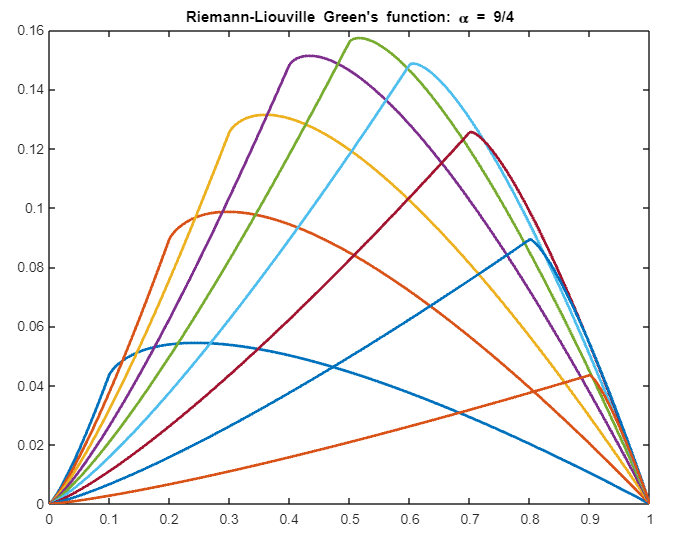}
\caption{Riemann-Liouville Green's kernels evaluated at equally spaced points on $[0,1]$.}
\end{figure}

Next, for the sake of completeness, we include the Caputo Green's function. However, for ease of presentation, in the theoretical analysis and numerical experiments (Sections \ref{sec: approximation orders} and \ref{sec: numerics}, respectively) we focus solely on the Riemann-Liouville case.

\begin{theorem}
Let $f:\Omega = (0,1)\times\mathbb{R} \to \mathbb{R}$ be continuous. Then a function $u\in C^1(\overline \Omega)$ 
solves
\[
\left\{
\begin{array}{rcl}
     -{}_0^C D_x^{\alpha} u(x) & = & f(x,u(x)), \quad \text{ in } \Omega \\
       u(0) & = & u(1) = 0 \\
\end{array} 
\right.
\]
if and only if $u \in C^1(\overline \Omega) $ satisfies $u(x) = \int_0^1G(x,z)f(z,u(z))dz$
where the Green's function $G(x,z)$ is given by
\[
G(x,z) = \frac{1}{\Gamma(\alpha)}\left\{
\begin{array}{rl}
     x(1-z)^{\alpha - 1} - (x-z)^{\alpha - 1},&   0\leq z\leq x \leq 1  \\
      x(1-z)^{\alpha - 1}, &  0\leq x\leq z \leq 1 \\
\end{array} 
\right. .
\]
\end{theorem}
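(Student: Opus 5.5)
We take $1<\alpha\le 2$, so $n=2$. The plan is to reduce the boundary value problem to a Volterra-type integral identity using the fractional integral ${}_0D_x^{-\alpha}$. Then we fix the two integration constants from the boundary conditions and read off the kernel. This mirrors the Riemann--Liouville case of Theorem~\ref{thm:RL Green's function}, with the Riemann--Liouville composition rule replaced by its Caputo counterpart.

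For the forward direction, let $u\in C^1(\overline\Omega)$ solve the problem and write $h(x)=f(x,u(x))$, which is continuous. Apply ${}_0D_x^{-\alpha}$ to both sides of $-{}^C_0D_x^\alpha u=h$. Since ${}^C_0D_x^\alpha u = {}_0D_x^{-(2-\alpha)}u''$, the semigroup property of fractional integrals gives
\[
{}_0D_x^{-\alpha}\,{}^C_0D_x^\alpha u = {}_0D_x^{-2}u'' = u(x)-u(0)-u'(0)x .
\]
Hence
\[
u(x) = c_0 + c_1x - \frac{1}{\Gamma(\alpha)}\int_0^x (x-z)^{\alpha-1}h(z)\,dz .
\]
The condition $u(0)=0$ gives $c_0=0$. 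The condition $u(1)=0$ gives
\[
c_1=\frac{1}{\Gamma(\alpha)}\int_0^1(1-z)^{\alpha-1}h(z)\,dz .
\]
Substituting and splitting $\int_0^1$ at $z=x$ yields exactly $u(x)=\int_0^1G(x,z)h(z)\,dz$ with the stated piecewise kernel. For $z\le x$ both terms $x(1-z)^{\alpha-1}$ and $-(x-z)^{\alpha-1}$ are present, while for $z>x$ only $x(1-z)^{\alpha-1}$ survives.

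For the converse, suppose $u\in C^1(\overline\Omega)$ satisfies the integral identity. Then $u(x)=c_1x-{}_0D_x^{-\alpha}h$ with $c_1$ as above. Evaluating at the endpoints gives $u(0)=0$ and $u(1)=c_1-c_1=0$. For the equation itself, note that ${}^C_0D_x^\alpha$ annihilates the linear term $c_1x$. So it suffices to show ${}^C_0D_x^\alpha\,{}_0D_x^{-\alpha}h=h$ for continuous $h$. Write ${}^C_0D_x^\alpha v = {}_0D_x^{\alpha}\bigl(v-v(0)-v'(0)x\bigr)$ with $v={}_0D_x^{-\alpha}h$. Since $\alpha>1$ and $h$ is bounded, $v(0)=0$ and $v'(0)=0$, because $v'={}_0D_x^{-(\alpha-1)}h=O(x^{\alpha-1})$. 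The Riemann--Liouville left-inverse property ${}_0D_x^\alpha\,{}_0D_x^{-\alpha}h=h$ then finishes the argument.

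The main obstacle is regularity. The composition rule ${}_0D_x^{-2}u''=u-u(0)-u'(0)x$ needs $u'\in AC$, or at least $u''\in L^1$, which is more than $u\in C^1$ provides. Likewise, the solution produced in the converse must actually have a Caputo derivative in the classical sense. I would first try to bypass this by working with ${}_0D_x^{-(2-\alpha)}(u'-u'(0))$ and differentiating once, using only $u\in C^1$. This interprets ${}^C_0D_x^\alpha u$ as $\frac{d}{dx}{}_0D_x^{-(2-\alpha)}u'$, in the spirit of the cited Theorem 5.1 in \cite{Jin}. Failing that, I would impose the standing assumption that the Caputo derivative exists, meaning $u\in AC^2(\overline\Omega)$.
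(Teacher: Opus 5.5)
Your derivation is correct and is the standard fractional-integral argument behind Corollary 5.1 of \cite{Jin}. The paper gives no proof beyond that citation, so you are supplying the omitted argument rather than taking a different route.

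Two things need tightening.

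\emph{Definition of the Caputo derivative.} Your two directions use different notions of it. The forward direction uses the paper's definition, with $u''$ under the integral. The converse rewrites this as ${}_0D_x^{\alpha}\bigl(v-v(0)-v'(0)x\bigr)$, which is an identity only when $v$ is twice differentiable with $v'$ absolutely continuous. The converse genuinely forces the regularized version. For merely continuous $h$, $u=c_1x-{}_0D_x^{-\alpha}h$ is $C^1$, but $u'=c_1-{}_0D_x^{-(\alpha-1)}h$ need not be absolutely continuous. So $u''$ need not exist, and the paper's definition may simply not apply. Your fallback of assuming $u\in AC^2(\overline\Omega)$ would add a hypothesis the theorem does not contain.

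So fix once and for all ${}^C_0D_x^\alpha u:={}_0D_x^{\alpha}\bigl(u-u(0)-u'(0)x\bigr)=\frac{d}{dx}\,{}_0D_x^{-(2-\alpha)}\bigl(u'-u'(0)\bigr)$. The forward direction then goes through by the Riemann--Liouville composition rule applied to $w=u-u(0)-u'(0)x$. Its $x^{\alpha-1}$ and $x^{\alpha-2}$ boundary terms vanish because $w\in C^1$ and $w(0)=0$. Be careful that the variant you wrote next, $\frac{d}{dx}\,{}_0D_x^{-(2-\alpha)}u'$, omits the $-u'(0)$. It sends $c_1x$ to $c_1x^{1-\alpha}/\Gamma(2-\alpha)\neq 0$, which would break your converse.

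\emph{Boundedness of $h$.} You use it to get $v'(0)=0$, and this step is load-bearing. It requires $f$ to be continuous on $[0,1]\times\mathbb{R}$, not merely on the open strip as printed. For example, take $f(x,u)=x^{1-\alpha}$. Then ${}_0D_x^{-\alpha}h=\Gamma(2-\alpha)x$, so $u\equiv 0$ satisfies the integral identity but not the equation.
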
    
\begin{proof}
    This is Corollary 5.1 in \cite{Jin}.
\end{proof}

\begin{figure}
\centering
\includegraphics[width=.4\textwidth]{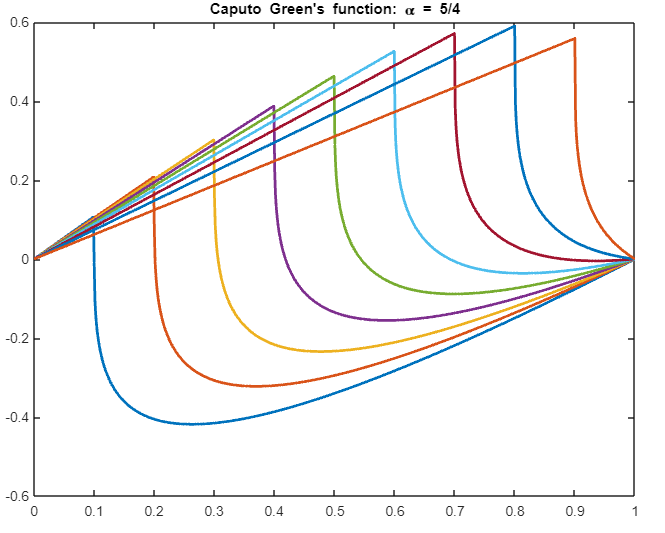}
\includegraphics[width=.4\textwidth]{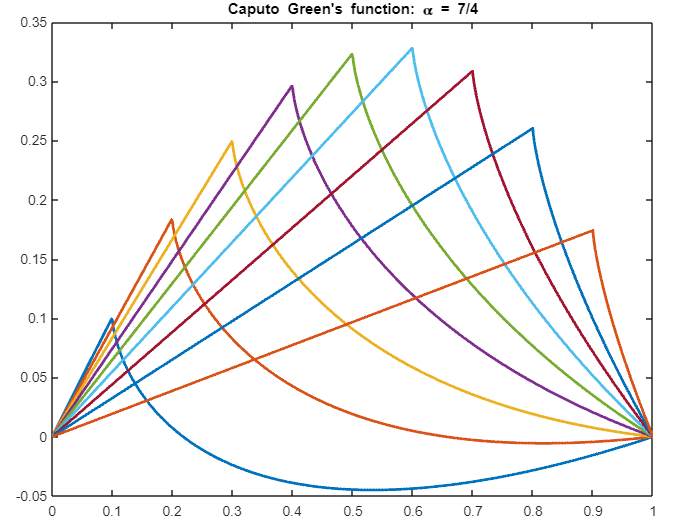}\\
\includegraphics[width=.4\textwidth]{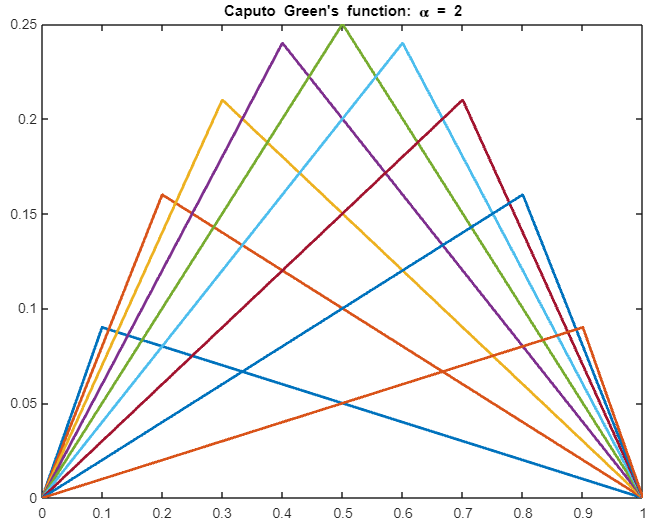}
\includegraphics[width=.4\textwidth]{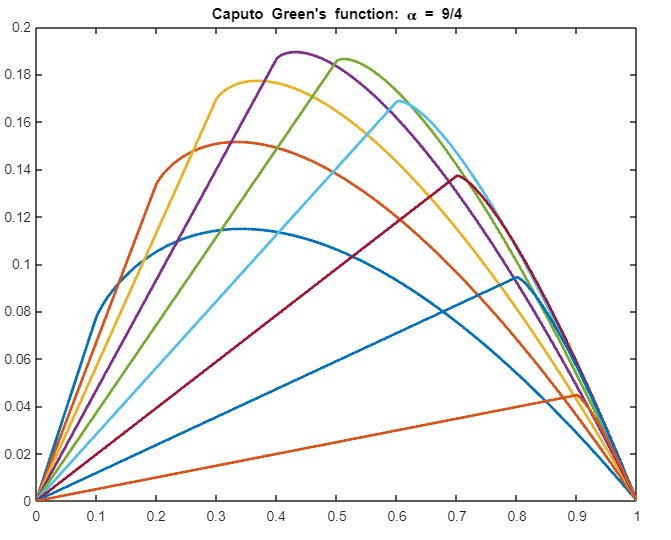}
\caption{Caputo Green's kernels evaluated at equally spaced points on $[0,1]$.}
\end{figure}

\subsection{Variational formulation}
The variational formulation \cite{Ervin,Jin} of the fractional boundary value problem  
\begin{equation} \label{BVP}
\left\{
\begin{array}{rcl}
     -{}_0 D_x^{\alpha} u(x) & = & f(x,u(x)), \quad \text{in }\Omega = (0,1) \\
       u(0) & = & u(1) = 0 \\
\end{array} 
\right.
\end{equation}
is to find the $u \in H^{\alpha/2}_0(\Omega)$ satisfying 
\[A(u,v) = \langle f,v\rangle, \quad \forall v \in H^{\alpha/2}_0(\Omega),\]
where the bilinear form on the left-hand side is given by
\begin{equation}\label{eq: bilinear form}
A(u,v) = -\langle{}_0 D_x^{\alpha} u,v\rangle = -\langle{}_0 D_x^{\alpha/2} u,{}_x D_1^{\alpha/2} v\rangle.
\end{equation}
The bilinear form, \eqref{eq: bilinear form}, will play a central role in the remainder of the paper.

The next theorem implies the stability of the variational formulation.

\begin{theorem}\label{thm: variational stability}
    For $\Omega = (0,1)$ and $\gamma \in (1/2,1)$, there exists a $c > 0$ depending on $\gamma$ which satisfies
    \[c\|u \|^2_{H_0^\gamma(\Omega)} \leq -\langle{}_0 D_x^{\gamma} u,{}_x D_1^{\gamma}u\rangle = A(u,u), \quad \forall u \in H_0^\gamma(\Omega).\]
\end{theorem}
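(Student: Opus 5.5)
The plan is to pass to the whole line, where both fractional derivatives become Fourier multipliers, and read off coercivity from the symbols. I will first prove the estimate for $u \in C_0^\infty(\Omega)$ and then extend to all of $H_0^\gamma(\Omega)$ by density, using that both sides are continuous in the $H^\gamma$ norm.

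\textbf{Step 1: reduce to an integral over $\mathbb{R}$.} Let $\tilde u$ denote the extension of $u$ by zero to $\mathbb{R}$. On $\Omega$, the left derivative ${}_0 D_x^{\gamma} u$ coincides with the whole-line left Riemann--Liouville derivative $D_-^\gamma \tilde u$ based at $-\infty$, since $\tilde u$ vanishes on $(-\infty,0)$. Likewise ${}_x D_1^{\gamma} u$ coincides with the right derivative $D_+^\gamma \tilde u$ based at $+\infty$. Outside $\Omega$ the two factors vanish on complementary sides:
\begin{itemize}
\item $D_-^\gamma \tilde u(x)=0$ for $x<0$, because it integrates $\tilde u$ over $(-\infty,x)$, where $\tilde u=0$;
\item $D_+^\gamma \tilde u(x)=0$ for $x>1$, for the symmetric reason.
\end{itemize}
Hence the product $D_-^\gamma\tilde u \cdot D_+^\gamma \tilde u$ is supported in $[0,1]$, and
\[\langle{}_0 D_x^{\gamma} u,{}_x D_1^{\gamma}u\rangle_{\Omega} = \int_{\mathbb{R}} D_-^\gamma\tilde u \, D_+^\gamma \tilde u \,dx .\]

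\textbf{Step 2: Plancherel.} The Fourier symbols are $\widehat{D_-^\gamma \tilde u}=(i\xi)^\gamma\hat{\tilde u}$ and $\widehat{D_+^\gamma \tilde u}=(-i\xi)^\gamma\hat{\tilde u}$; the factor $(-1)^n$ in the right derivative with $n=1$ gives exactly this sign. Since $\overline{(-i\xi)^\gamma}=(i\xi)^\gamma$, Plancherel gives
\[\int_{\mathbb{R}} D_-^\gamma\tilde u \, D_+^\gamma \tilde u\,dx = \frac{1}{2\pi}\int_{\mathbb{R}} (i\xi)^{2\gamma}|\hat{\tilde u}(\xi)|^2 d\xi .\]
Write $(i\xi)^{2\gamma}=|\xi|^{2\gamma}e^{i\pi\gamma\,\mathrm{sign}\,\xi}$. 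The left-hand side is real, so only the real part survives, and
\[A(u,u) = -\cos(\pi\gamma)\,\frac{1}{2\pi}\int_{\mathbb{R}} |\xi|^{2\gamma}|\hat{\tilde u}|^2 d\xi = -\cos(\pi\gamma)\,|\tilde u|^2_{H^\gamma(\mathbb{R})}.\]
For $\gamma\in(1/2,1)$ we have $-\cos(\pi\gamma)>0$, which is precisely where the hypothesis on $\gamma$ enters.

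\textbf{Step 3: norm equivalence.} It remains to show $|\tilde u|_{H^\gamma(\mathbb{R})}\gtrsim \|u\|_{H_0^\gamma(\Omega)}$. Two ingredients are needed.
\begin{itemize}
\item A fractional Poincaré inequality for functions supported in $[0,1]$, giving $\|\tilde u\|_{L^2}\le C|\tilde u|_{H^\gamma(\mathbb{R})}$. One route is to write $\tilde u$ as the left Riemann--Liouville integral of order $\gamma$ of $D_-^\gamma\tilde u$, which is supported in $[0,\infty)$, and bound that integral on $[0,1]$ by Young's inequality.
\item The equivalence of $\|\tilde u\|_{H^\gamma(\mathbb{R})}$ with $\|u\|_{H^\gamma(\Omega)}$ for $u\in H_0^\gamma(\Omega)$. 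This is the standard zero-extension result, valid because $\gamma\neq 1/2$, via a Hardy-type inequality.
\end{itemize}
Combining these gives the stated inequality with a constant $c$ depending on $\gamma$.

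\textbf{Main obstacle.} I expect the hardest part to be the rigor in Steps 1--2: justifying the Fourier symbols and Plancherel for $D_\pm^\gamma\tilde u$. The difficulty is that $D_-^\gamma\tilde u$ need not decay fast, since it behaves like $x^{-1-\gamma}$ at infinity. I would handle this by working first with $u\in C_0^\infty(\Omega)$, where both whole-line derivatives lie in $L^2(\mathbb{R})$, and then closing by density.
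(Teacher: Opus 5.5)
The paper gives no argument of its own here; its proof is a one-line citation of Lemma 5.3 in Jin's monograph. Your proposal instead gives a self-contained proof by the standard Fourier argument going back to Ervin and Roop, and it is correct. The complementary-support observation reduces the pairing on $(0,1)$ to one on $\mathbb{R}$. The symbols $(i\xi)^\gamma$ and $(-i\xi)^\gamma$ match the paper's sign conventions. Plancherel then gives $A(u,u)=-\cos(\pi\gamma)\,\frac{1}{2\pi}\int_{\mathbb{R}}|\xi|^{2\gamma}|\hat{\tilde u}(\xi)|^2\,d\xi$. Finally, the Poincaré bound via $u=I^\gamma_{0+}({}_0D_x^\gamma u)$ and Young's inequality works, with constant $1/\Gamma(\gamma+1)$.

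The citation buys brevity. Your route buys transparency about where $\gamma\in(1/2,1)$ enters: the sign of $\cos(\pi\gamma)$, and $\gamma\neq 1/2$ for the zero extension. It also yields an explicit constant, e.g.\ $c=-\cos(\pi\gamma)/\bigl(\Gamma(\gamma+1)^{-2}+C_\gamma\bigr)$ with $C_\gamma=\int_{\mathbb{R}}(2-2\cos t)\,|t|^{-1-2\gamma}\,dt$.

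One bookkeeping correction in Step 3: you have assigned the two directions of the zero-extension equivalence to the wrong steps. The coercivity estimate itself only needs the trivial bound $|u|_{W_2^\gamma(\Omega)}^2\le C_\gamma\,\frac{1}{2\pi}\int_{\mathbb{R}}|\xi|^{2\gamma}|\hat{\tilde u}|^2\,d\xi$. This follows by enlarging the Slobodeckij double integral from $\Omega\times\Omega$ to $\mathbb{R}\times\mathbb{R}$, with no Hardy inequality involved. The nontrivial Hardy-type direction $\|\tilde u\|_{H^\gamma(\mathbb{R})}\le C\|u\|_{H^\gamma(\Omega)}$ is what your density step needs. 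It guarantees that $u_n\to u$ in $H_0^\gamma(\Omega)$ forces $D^\gamma_\pm\tilde u_n\to D^\gamma_\pm\tilde u$ in $L^2(\mathbb{R})$, so that $A(u,u)$ is well defined and continuous on $H_0^\gamma(\Omega)$.
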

\begin{proof}
    This is Lemma 5.3 in \cite{Jin}.
\end{proof}
The following corollary will be helpful in proving the convergence rates of the kernel interpolants.
\begin{corollary}\label{the corollary}
    For $\Omega = (0,1)$ and $\gamma \in (1/2,1)$, there exists a $C > 0$ depending on $\gamma$ which satisfies
    \[|u |^2_{H_0^\gamma(\Omega)} \leq C|\langle{}_0 D_x^{\gamma} u,{}_x D_1^{\gamma}u\rangle|, \quad \forall u \in H_0^\gamma(\Omega).\]
\end{corollary}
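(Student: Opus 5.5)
The corollary follows directly from Theorem \ref{thm: variational stability}, so the plan is to pass from the full norm to the seminorm and from the signed quantity to its absolute value. Fix $\gamma\in(1/2,1)$ and $u\in H_0^\gamma(\Omega)$.

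First, by the definition of the norm on $W_2^\gamma(\Omega)=H^\gamma(\Omega)$,
\[
|u|^2_{H^\gamma_0(\Omega)} \leq \|u\|^2_{L^2(\Omega)} + |u|^2_{H^\gamma_0(\Omega)} = \|u\|^2_{H_0^\gamma(\Omega)} .
\]
Next, Theorem \ref{thm: variational stability} gives a constant $c>0$, depending only on $\gamma$, with
\[
c\|u\|^2_{H_0^\gamma(\Omega)} \leq -\langle {}_0D_x^\gamma u, {}_xD_1^\gamma u\rangle .
\]
Because the left-hand side is nonnegative, the right-hand side is a nonnegative real number. A real number bounded below by a nonnegative quantity equals its own absolute value, so
\[
-\langle {}_0D_x^\gamma u, {}_xD_1^\gamma u\rangle = \bigl|\langle {}_0D_x^\gamma u, {}_xD_1^\gamma u\rangle\bigr| .
\]
Combining these and setting $C=1/c$ yields
\[
|u|^2_{H_0^\gamma(\Omega)} \leq \|u\|^2_{H_0^\gamma(\Omega)} \leq C\,\bigl|\langle {}_0D_x^\gamma u, {}_xD_1^\gamma u\rangle\bigr| ,
\]
with $C$ depending only on $\gamma$.

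There is no real obstacle. The one point to state carefully is the sign argument: the absolute value is legitimate because Theorem \ref{thm: variational stability} already forces the quantity $-\langle {}_0D_x^\gamma u, {}_xD_1^\gamma u\rangle$ to be nonnegative.

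The seminorm in the corollary is read as the Sobolev–Slobodekij seminorm $|\cdot|_{W_2^\gamma}$ applied to $u\in H_0^\gamma$. The paper's displayed seminorm writes $(v(x)-v(y))^p$ rather than $|v(x)-v(y)|^p$. For $p=2$ these agree, so the first inequality above holds as written.
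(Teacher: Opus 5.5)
Your proposal is correct and is essentially the paper's argument, which likewise just combines Theorem~\ref{thm: variational stability} with the bound $|u|^2_{H_0^\gamma(\Omega)} \leq \|u\|^2_{H_0^\gamma(\Omega)}$ from the definition of the norm, giving $C = 1/c$. Your sign discussion is valid but not needed, since $-x \leq |x|$ for every real $x$.
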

\begin{proof}
    This follows directly from Theorem \ref{thm: variational stability} and the definition of $\|\cdot\|_{H_0^\gamma(\Omega)}$.
\end{proof}

\section{Reproducing Kernel Banach Spaces}
Except for some special cases, we note that the Green's kernels proposed above are not symmetric ($\alpha = 2$ is the standard Brownian bridge kernel). This runs counter to the general theory of reproducing kernel Hilbert spaces. In particular, for our purposes, a \emph{kernel} $K$ is a real-valued function of two variables:
\[K: \Omega \times \Omega \to \mathbb{R}, \quad K: ( x,  z) \mapsto K( x, z)\]
We would like our kernel, $K$, to be (strictly) \emph{positive definite}. This means that the (symmetric) kernel matrix $\text{K}$ with entries 
\[(\text{K})_{i,j} = K( x_i, x_j), \quad  i,j = 1, \dots, N\]
is positive definite for any $N \in \mathbb{N}$ and for any set $\mathcal{X} = \{ x_1, \dots,  x_N \} \subset \Omega$ of distinct points. This will ensure that the matrix $\text{K}$ is invertible. Special cases aside, if we take the Green's kernel, $G(x,z)$, of Theorem 2.1 or 2.2 it is clear that the corresponding interpolation matrix with entries
\[(\text{G})_{i,j} = G( x_i, x_j), \quad  i,j = 1, \dots, N\]
is not necessarily positive definite since the matrix $\text{G} $  will not be symmetric. This leads us to the following:

\subsection{Reproducing kernels for RKBS's}

In several recent papers, reproducing kernel Hilbert spaces have been generalized to reproducing kernel Banach spaces \cite{XuYe, Lin, ZXZ}. In an early paper, the RKBS was constructed with the inner product replaced by a semi-inner product (which is not necessarily symmetric anymore)\cite{ZXZ}.  More recently, RKBSs have been described by means of a bilinear form on a pair of Banach spaces \cite{XuYe, Lin}. The following is the definition  of reproducing kernels for RKBS given in \cite{Lin}:
\begin{definition} \label{def: reproducing kernel}
Let $\mathcal{B}_1$ be an RKBS on a set $\Omega_1$. If there exists a Banach space $\mathcal{B}_2$ of functions on another set $\Omega_2$, a continuous bilinear form $\langle \cdot, \cdot\rangle_{\mathcal{B}_1 \times \mathcal{B}_2}$, and a function $K$ on $\Omega_1 \times \Omega_2$ such that $K(x,\cdot) \in\mathcal{B}_2  $ for all  $x \in \Omega_1$
 and $f(x)=\langle f,K(x,\cdot)\rangle_{\mathcal{B}_1 \times \mathcal{B}_2}$
 for all $x \in \Omega_1$ and all $f \in \mathcal{B}_1$, then we call $K$ a reproducing kernel for $\mathcal{B}_1$.

If, in addition, $\mathcal{B}_2$ is also an RKBS on $\Omega_2$ and it holds $K(\cdot,y) \in \mathcal{B}_2$ for all $y \in \Omega_2$ and $g(y)=\langle K(\cdot,y),g\rangle_{\mathcal{B}_1 \times \mathcal{B}_2}$ for all $y \in \Omega_2$ and all $g \in \mathcal{B}_2$, then we call $\mathcal{B}_2$ an adjoint RKBS of $\mathcal{B}_1$
 and call $\mathcal{B}_1$ and $\mathcal{B}_2$ a pair of RKBSs.

In this case, $K'(x,y):=K(y,x)$, $x \in \Omega_2$, $y \in \Omega_1$, is a reproducing kernel for $\mathcal{B}_2$.
\end{definition}
In short, we should be able to show that the fractional Green's kernels are reproducing kernels for some RKBS and that, in some special cases (e.g., $\alpha = 2$), these Banach spaces are, in fact, the Hilbert spaces reproduced by the integer order Brownian bridge kernels. 

\subsection{Constructing the RKBS's via feature maps}

What follows is the generic construction in \cite{Lin} of an RKBS via a pair of feature maps.

Let $\mathcal{W}_1$, $\mathcal{W}_2$ be two Banach spaces, and $\langle \cdot, \cdot \rangle_{\mathcal{W}_1\times \mathcal{W}_2}$ be a continuous bilinear form on $\mathcal{W}_1\times \mathcal{W}_2$. Suppose there exists two nonempty sets $\Omega_1$ and $\Omega_2$, and mappings
    \[\Phi_1: \Omega_1 \rightarrow \mathcal{W}_1, \quad
    \Phi_2: \Omega_2 \rightarrow \mathcal{W}_2\]
such that with respect to the bilinear form 
\[\text{span } \Phi_1(\Omega_1) \text{ is dense in } \mathcal{W}_1, \quad \text{span } \Phi_2(\Omega_2) \text{ is dense in } \mathcal{W}_2.\]
We construct 
\[\mathcal{B}_1 \coloneqq \left\{ f_v(x) \coloneqq \langle \Phi_1(x), v \rangle_{\mathcal{W}_1\times \mathcal{W}_2} : v \in \mathcal{W}_2, x \in \Omega_1\right\}\]
with norm  
\[\|f_v\|_{\mathcal{B}_1} \coloneqq \|v\|_{\mathcal{W}_2}\]
and 
\[\mathcal{B}_2 \coloneqq \left\{ g_u(y) \coloneqq \langle u, \Phi_2(y) \rangle_{\mathcal{W}_1\times \mathcal{W}_2} : u \in \mathcal{W}_1, y \in \Omega_2\right\}\]
with norm  
\[\|g_u\|_{\mathcal{B}_2} \coloneqq \|u\|_{\mathcal{W}_1}.\]
Taking $\mathcal{B}_1$ and $\mathcal{B}_2$ defined above with the bilinear form on $\mathcal{B}_1 \times \mathcal{B}_2$ defined as
\[\langle f_v , g_u \rangle_{\mathcal{B}_1 \times \mathcal{B}_2} \coloneqq \langle u,v \rangle_{\mathcal{W}_1 \times \mathcal{W}_2}, \quad \forall f_v \in \mathcal{B}_1, g_u \in \mathcal{B}_2\]
Theorem 2.3 in \cite{Lin} guarantees that $\mathcal{B}_1$ is an RKBS on $\Omega_1$ with the adjoint RKBS $\mathcal{B}_2$ on $\Omega_2$.\\

Following the above construction, we take $\mathcal{W}_1 = \mathcal{W}_2 = H_0^{\alpha/2}(\Omega)$ with $\Omega_1 = \Omega_2 = \Omega = (0,1)$, then by Lemma 3.3 of \cite{Ervin} the bilinear form \eqref{eq: bilinear form} is a continuous bilinear form over $\mathcal{W}_1\times \mathcal{W}_2$, namely, we set

\[ \langle \cdot,\cdot\rangle_{\mathcal{W}_1 \times \mathcal{W}_2} = \langle{}_0 D_x^{\alpha/2} \cdot ,{}_x D_1^{\alpha/2} \cdot \rangle  .\] Next, we take the feature maps $\Phi_1$ and $\Phi_2$ to be the canonical feature maps defined by the Green's function $G(x,z)$ of Theorem \ref{thm:RL Green's function}, and $G'(x,z)$, the Green's function of the corresponding right Riemann-Liouville boundary value problem, respectively. That is, we define
\[\Phi_1(x) = G(\cdot,x), \text{ and } \Phi_2(y) = G'(\cdot,y).\]
For instance, the reproducing kernel, $K(x,y)$, of Definition \ref{def: reproducing kernel} can be computed as
\begin{multline}K(x,y) = \langle \Phi_1(x),\Phi_2(y)\rangle_{\mathcal{W}_1 \times \mathcal{W}_2} = \\ \langle{}_0 D_x^{\alpha/2} G(\cdot,x) ,{}_x D_1^{\alpha/2} G'(\cdot,y)\rangle  =  
\langle{}_0 D_x^{\alpha} G(\cdot,x) ,G'(\cdot,y)\rangle  =  \langle\delta_x (\cdot) ,G'(\cdot,y)\rangle = G'(x,y). \end{multline}

\section{Approximation Orders via Fractional Sampling Inequalities}\label{sec: approximation orders}

Given that approximation orders for (integer order) Green's kernel can be established through the use of sampling inequalities, we seek to do the same in the fractional order setting \cite{Arcangeli,Corrigan}. Fortunately, much of the theoretical work for RKBS's has been done. In particular, the optimality of the kernel interpolants has been proven (see Section 5 of \cite{ZXZ} or Section 2.6 of \cite{XuYe}, for example). Given this, the results of the numerical experiments of Section \ref{sec: interp probs} suggest that we should be able to prove that for some sufficiently smooth function $f$,
\[\|f-s\|_{L^2[0,1]} = \mathcal{O}(h^\alpha)\]
where $h$ is the fill distance defined by
\[h = \sup_{x \in \Omega} \min_{x_j \in \mathcal{X}}\|x-x_j \|\]
and $s$ is the kernel interpolant of the form \eqref{eq: general interpolant}.

\begin{theorem} \label{thm: convergence rates}
    Let $f\in \left\{C^1_0([0,1]): f' \in C_0([0,1))\right\}$ be interpolated by a left Riemann-Liouville Green's kernel, $G(x,z)$, with $1< \alpha < 2$ on the set $\mathcal{X} = \{x_1 \dots, x_N \}$. Then
    \[\|f-s\|_{L_2[0,1]} \leq C_3 h^{\alpha}\| {}_0 D_x^{\alpha}f\|_{L_2[0,1]}\]
    for some constant $C_3$ that depends on $\alpha$.
\end{theorem}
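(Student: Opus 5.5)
The natural route is the standard two-stage argument for Green's kernel interpolants: first a sampling-inequality bound in the energy norm, then a duality (Aubin--Nitsche type) argument to double the order. Set $e = f - s$. Since $s$ interpolates $f$ on $\mathcal{X}$, $e$ vanishes on $\mathcal{X}$, and also at $0$ and $1$, because every $G(\cdot,x_j)$ vanishes there and so does $f$. The fractional sampling inequality of \cite{Arcangeli,Corrigan}, applied with $\gamma = \alpha/2 \in (1/2,1)$ to a function with zeros on $\mathcal{X}$, gives
\[
\|e\|_{L_2[0,1]} \le C_1 h^{\alpha/2} |e|_{H^{\alpha/2}(\Omega)} .
\]
Corollary \ref{the corollary} then gives
\[
|e|^2_{H^{\alpha/2}} \le C \,|\langle {}_0D_x^{\alpha/2} e, {}_xD_1^{\alpha/2} e\rangle| = C\,|A(e,e)| .
\]

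Next we use the orthogonality built into the RKBS structure of Section 3. By construction,
\[
A(u, G'(\cdot,x_j)) = u(x_j)
\]
for $u \in H_0^{\alpha/2}$, up to the sign convention in \eqref{eq: bilinear form}. Hence $A(e,v) = 0$ for every $v \in V_{\mathcal{X}}' = \mathrm{span}\{G'(\cdot,x_j)\}$. The interpolant is therefore a Petrov--Galerkin projection, with trial space $\mathrm{span}\{G(\cdot,x_j)\}$ and test space $V'_{\mathcal{X}}$.

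With this orthogonality the energy bound follows. We have
\[
A(e,e) = A(f,e) - A(s,e).
\]
Integrating by parts via \eqref{eq: bilinear form}, the first term is $-\langle {}_0D_x^\alpha f, e\rangle$, which the hypothesis $f' \in C_0$ justifies by making the boundary terms vanish. The second term $A(s,e)$ must be controlled using orthogonality. Since $s$ lies in the trial space rather than the test space, this is where the non-symmetry bites. The plan is to rewrite $A(s,e)$ through the adjoint kernel identity
\[
A(G(\cdot,x_j), v) = v(x_j),
\]
which comes from $-{}_0D_x^\alpha G(\cdot,x_j) = \delta_{x_j}$. This gives $A(s,e) = \sum_j c_j e(x_j) = 0$. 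So we get
\[
|A(e,e)| \le \|{}_0D_x^\alpha f\|_{L_2} \|e\|_{L_2}.
\]

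Now combine. From the two displays above,
\[
\|e\|_{L_2}^2 \le C_1^2 h^{\alpha} |e|^2_{H^{\alpha/2}} \le C_1^2 C h^\alpha \|{}_0D_x^\alpha f\|_{L_2}\|e\|_{L_2}.
\]
Dividing by $\|e\|_{L_2}$ gives
\[
\|e\|_{L_2} \le C_3 h^\alpha \|{}_0D_x^\alpha f\|_{L_2},
\]
with $C_3 = C_1^2 C$ depending only on $\alpha$. The duality step is thus absorbed automatically, because the error functional is tested against $e$ itself.

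The main obstacle is the middle step. It requires the precise justification that $A(s,e)=0$ and $A(f,e) = -\langle {}_0D_x^\alpha f, e\rangle$ in the non-symmetric setting. This means checking that $G(\cdot,x_j) \in H_0^{\alpha/2}$, even though it carries the singular factor $x^{\alpha-1}$; this holds since $\alpha-1 > \alpha/2 - 1/2$. It also means checking that the delta-function identity holds in the weak sense in the form $A(G(\cdot,x_j),v) = v(x_j)$, with the correct placement of the left and right operators. I would first verify this by direct computation on the explicit formula of Theorem \ref{thm:RL Green's function}. A secondary issue is confirming that the cited fractional sampling inequality applies on $(0,1)$ for $\gamma\in(1/2,1)$, with constants independent of $\mathcal{X}$ for $h$ small enough.
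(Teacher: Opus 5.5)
Your proposal is correct and is essentially the paper's proof. It uses the same sampling inequality ($m=0$, $\beta=\alpha/2$, $p=q=2$), Corollary \ref{the corollary}, the orthogonality $A(s,e)=0$, and Cauchy--Schwarz on $A(f,e)=-\langle {}_0D_x^{\alpha}f,e\rangle$, giving $C_3=C_1^2C$; the only difference is that you derive the orthogonality explicitly from $A(G(\cdot,x_j),v)=v(x_j)$, which is exactly what the paper's appeal to ``orthogonality in RKBS's'' (via the feature map $\Phi_1(x)=G(\cdot,x)$) amounts to.
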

\begin{proof}
   Let $1 < \alpha < 2$, $p \in [1, \infty]$, $q \in [1, \infty)$,  and consider the fractional order sampling inequality of Theorem 3.2 in \cite{Arcangeli}:
    \[|u|_{W_q^m(\Omega)}\leq C_1\left(h^{\beta-m-\left(\frac{1}{p} - \frac{1}{q}\right)_{\hspace{-0.25em}+}}|u|_{W^\beta_p(\Omega)} + h^{-m}\| \bold u\|_{\infty}\right)\]
where $m\geq 0$ and $\beta>m+\frac{1}{p}$ define the \emph{strong} semi-norm $|\cdot|_{W^\beta_p(\Omega)}$ which together with the discrete norm of the values $ \bold u = (u( x_1)\dots u( x_N))^T$ of $u$ on the set $\mathcal{X} = \{ x_1, \dots,  x_N \} \subset \mathbb{R}$ defined by
\[\|\bold u\|_\infty = \max_{i = 1, \dots, N}|u(x_i)|\]with fill distance $h$ bound the \emph{weak} semi-norm $|\cdot|_{W_q^m(\Omega)}$.
Take $u$ to be the residual $f-s$ where $s$ interpolates $f$ on $\mathcal{X}$, then we have
\[|f-s|_{W_q^m(\Omega)}\leq C_1h^{\beta-m-d\left(\frac{1}{p} - \frac{1}{q}\right)_{\hspace{-0.25em}+}}|f-s|_{W^\beta_p(\Omega)},\]
since $u(x_i) = f(x_i) - s(x_i) = 0$, $i = 1, \dots, N$.
Additionally, if we assume $\Omega = [0,1]$, set $m = 0$, $\beta = \alpha/2$, let $p = q= 2$, then we have
\begin{equation}\label{eq: suboptimal bound 2}
\|f-s\|_{L_2([0,1])}\leq C_1 h^{\alpha/2}|f-s|_{H^{\alpha/2}([0,1])}
\end{equation}
or, 
\begin{equation}\label{eq: suboptimal bound 1}
\|f-s\|^2_{L_2([0,1])}\leq C_1^2 h^{\alpha}|f-s|^2_{H^{\alpha/2}([0,1])}
\end{equation}
provided $f,s \in H_0^{\alpha/2}([0,1])$. If we choose  $f\in \left\{C^1_0([0,1]): f' \in C_0([0,1))\right\}$ and $s$ defined by \eqref{eq: general interpolant} with the kernel given by the Green's function of Theorem \ref{thm:RL Green's function}, these conditions are satisfied. Notice that \eqref{eq: suboptimal bound 2} offers a suboptimal bound on the $L^2$ norm of the residual. We now show how the regularity assumptions on $f$ lead to an optimal error bound.

Consider, from Corollary \ref{the corollary}, that we have 
\[|f-s|^2_{H_0^{\alpha/2}([0,1])} \leq  C_2 \left|\left \langle {}_0 D_x^{\alpha/2} (f-s),{}_x D_1^{\alpha/2} (f-s)\right \rangle \right|, \quad \forall f-s \in H_0^{\alpha/2}([0,1]),\]
where $C_2$ depends on $\alpha$. However, for $f-s \in H_0^{\alpha/2}$ and $\mathcal{B}([0,1]) = H^{\alpha/2}_0([0,1]) \times H^{\alpha/2}_0([0,1])$
\[\left \langle {}_0 D_x^{\alpha/2} (f-s),{}_x D_1^{\alpha/2} (f-s)\right \rangle  = \left \langle  f-s, f-s\right \rangle_{\mathcal{B}([0,1])}, \]
and by orthogonality in RKBS's \cite{Fass2, XuYe, ZXZ}
\[\left \langle  f-s, f-s\right \rangle_{\mathcal{B}([0,1])} = \left \langle  f, f-s\right \rangle_{\mathcal{B}([0,1])}. \]
Next, by \eqref{eq: bilinear form}, the Cauchy-Schwarz inequality, and the smoothness of $f$, we have
\[| \left \langle f, f-s\right \rangle_{\mathcal{B}([0,1])}| \leq \|{}_0 D_x^{\alpha} f\|_{L^2([0,1])}\|f-s\|_{L^2([0,1])}. \]
Combining the above result with \eqref{eq: suboptimal bound 1}, and letting $C_3 = C_1^2C_2$, we see, 
\begin{equation}\label{eq: bound 1}
\|f-s\|^2_{L_2([0,1])}\leq C_3 h^{\alpha}\|{}_0 D_x^{\alpha} f\|_{L^2([0,1])}\|f-s\|_{L^2([0,1])}
\end{equation}
or, 
\begin{equation}\label{eq: bound 2}
\|f-s\|_{L_2([0,1])}\leq C_3 h^{\alpha}\|{}_0 D_x^{\alpha} f\|_{L^2([0,1])}
\end{equation}
as desired.
\end{proof}

\begin{remark}
   A similar result to Theorem \ref{thm: convergence rates} can be derived for the right Riemann-Liouville Green's kernel interpolants. In that case we require $f\in \left\{C^1_0[0,1]: f' \in C_0((0,1])\right\}$ and the error bound obtained will be in terms of $\|{}_x D_1^{\alpha} f\|_{L^2([0,1])}$. The proof, however, is nearly identical.
\end{remark}
\begin{remark}
  If $\alpha = 2$, then $G(x,z)$ is the Brownian bridge kernel and the convergence result is a special case of Theorem 9.1 in \cite{Fass}.
\end{remark}

\section{Kernel Galerkin Methods}\label{sec:galerkin}
In what follows, we assume that we have a set of pairwise distinct sampling centers $\mathcal{X} = \{x_1,x_2,\ldots,x_N\} \subseteq \Omega = (0,1)$.
Let $V = H^{\alpha/2}_0(\overline\Omega)$ and define
\[
V_N =\text{span}\left\{G(\cdot,x_1), G(\cdot,x_2), \ldots, G(\cdot,x_N) \right\}
\]
with $G$ the reproducing kernel of $V$ and Green's kernel of (for instance)
\begin{equation}\label{eq: bvp}
\left\{
\begin{array}{rcl}
     -{}_0 D_x^{\alpha} u(x) & = & f(x), \quad \text{in } (0,1) \\
      u(0) & = & u(1) = 0 \\
\end{array} 
\right. .
\end{equation}
\subsection{Two-point boundary value problems}\label{sec:frac_bvp}
We introduce the Kernel-based Galerkin approximation problem for \eqref{eq: bvp}:

Find $u_N \in V_N$ such that
\[
\int_{\Omega} -{}_0 D_x^{\alpha} u_N  v dx = \int_{\Omega} f v dx, \quad v \in V_N,
\]
where
\[
u_N(x) = \sum_{j=1}^{N} c_j G(x,x_j).
\]
Next, we take $v(\cdot) = G(\cdot,x_i),\ x_i \in \mathcal{X}$, which gives
\[
\begin{aligned}
\sum_{j=1}^N c_j G(x_i,x_j) &= \sum_{j=1}^N c_j \int_{\Omega} \delta_{x_j}(x) G(x,x_i) dx = \int_{\Omega} -{}_0 D_x^{\alpha}u_N v dx \\
&= \int_{\Omega} f(x) G(x,x_i) dx.
\end{aligned}
\]
This leads to a linear system
\[
\text{G} \bold c = \bold f,
\]
where 
\[\text{G}_{ij} = G(x_i,x_j), \, i,j = 1,\ldots,N, \quad \bold c = [c_1,c_2,\ldots,c_N]^T \]
and
\[\bold f = [f_1,f_2,\ldots,f_N]^T, \quad f_i = \int_{\Omega} f(x) G(x,x_i) dx\]
with the integral determined by numerical integration.

\subsection{A fractional diffusion problem}\label{sec:frac_diff}

Let $\{t_n\}_{n=0}^{N_t}$ be a partition of $[T_0,T_1]$, where $t_n  = T_0 + n\tau$ and the time step 
\[ \tau = \frac{T_1 - T_0}{N_t},\] then the kernel-based Galerkin Crank-Nicolson scheme for the fractional diffusion problem \eqref{eq: frac diff 1}-\eqref{eq: frac diff 3} is given as follows. For $n = 1, \dots, N_T-1$, find $u^n_N \in V_N$ such that
\[
\int_{\Omega} d_t u^n_N  v dx  - \int_{\Omega} {}_0 D_x^{\alpha} u^{n+1/2}_N  v dx = \int_{\Omega} f^{n+1/2} v dx, \quad v \in V_N,
\]
where
\[
u^n_N(x) = \sum_{j=1}^{N} c^n_j G(x,x_j), \quad d_t u^n_N = \frac{u^{n+1}_N - u^{n}_N}{\tau}, \text{ and }  \phi^{n+1/2} = \frac{\phi^{n+1} + \phi^{n}}{2}.
\]
Moreover, in matrix-vector form, we can write
\[\left(\text{A}+\frac{\tau}{2}\text{G} \right) \bold c^{n+1} = \left(\text{A} -\frac{\tau}{2}\text{G} \right)\bold c^{n}  + \tau \bold f^{n+1/2},  \]
where 
\[\text{A}_{ij} = \int_{\Omega}G(\cdot,x_i)G(\cdot,x_j)dx \text{ and } \text{G}_{ij} = G(x_i,x_j), \, i,j = 1,\ldots,N, \quad \bold c^n = [c^n_1,c^n_2,\ldots,c^n_N]^T \]
and
\[\bold f^{n+1/2} = [f^{n+1/2}_1,f^{n+1/2}_2,\ldots,f^{n+1/2}_N]^T, \quad f^{n+1/2}_i = \int_{\Omega} f(x, \tau(n + 0.5)) G(x,x_i) dx\]
with all integrals above determined by numerical integration.

\section{Numerical Results}\label{sec: numerics}

\subsection{Interpolation problems}\label{sec: interp probs}
We will solve some basic interpolation problems to test the convergence rate of the left Riemann-Liouville Green's kernel interpolant for functions satisfying different smoothness conditions at the boundaries using
$\alpha \in\{1,1.25,1.5,1.75,2\}$. For a first set of experiments, we uniformly sample the functions 
\begin{equation}\label{eq: test func 1}
f_1(x) = \sin\left(\frac{\pi}{2} x \right)x(1-x)
\end{equation}
and
\begin{equation}\label{eq: test func 2}
f_2(x) = \cos\left(\frac{\pi}{2} x \right)x(1-x)
\end{equation}
at $N = 20,40,80,160,320$ points in $(0,1)$ and evaluate each of the interpolants at 
$N_{eval} = 1000$ points in $[0,1]$. Note that the functions \eqref{eq: test func 1} 
and \eqref{eq: test func 2} satisfy the following boundary conditions,
\[f_1(0) = f_1(1) = f_2(0) = f_2(1) = 0\]
with 
\[f'_1(0) = f'_2(1) = 0,\]
and
\[-f'_1(1) = f'_2(0) = 1.\]
Thus, both $f_1$ and $f_2$ vanish at the boundary, but choosing between the two functions allows us to test how the boundary conditions of their derivatives affect convergence rates of their corresponding interpolants. To that end, for each value of $\alpha$, and each value of $N$, we measure the root mean squared error
\begin{equation}\label{eq:rmse}
Error = \left(\frac{1}{N_{eval}}\sum_{i=1}^{N_{eval}}(s(x_i) - f(x_i))^2\right)^{1/2}.\end{equation}
Given that $f_1'$ vanishes at the left boundary but $f_2'$ does not, we only expect to obtain optimal convergence rates for the left Riemann-Liouville Green's kernel interpolant of $f_1$. As predicted, the convergence rate for the interpolant of $f_1$ is computed to be $\mathcal{O}(h^{\alpha})$, but for the interpolant of $f_2$ this optimal convergence rate is not guaranteed. These results are shown in Figure \ref{fig:rates}.

\begin{figure}[ht]
\centering
\includegraphics[width=0.45\textwidth]{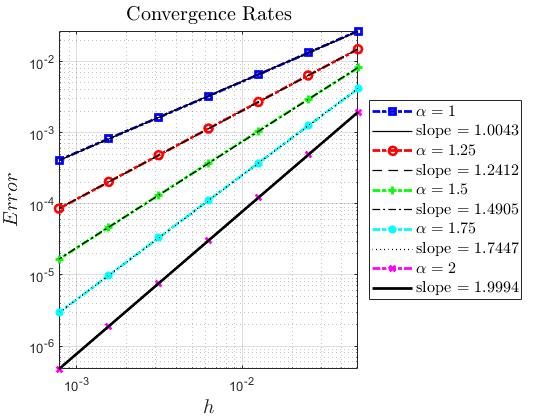}
\includegraphics[width=0.45\textwidth]{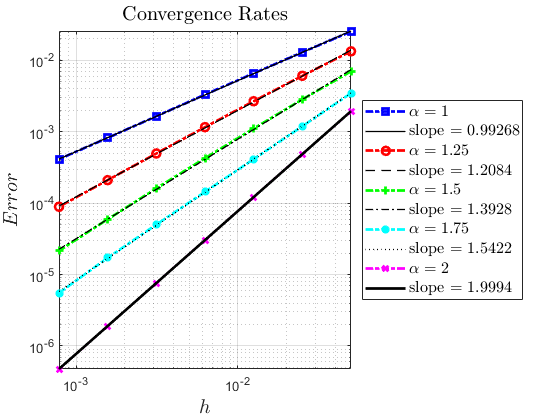}
\caption{Errors and convergence rates for left fractional Riemann-Liouville Green's Green's kernel interpolation sampled at uniform points for $f_1$ (left) and $f_2$ (right).}
\label{fig:rates}
\end{figure}

For the next set of experiments, we once again sample the functions $f_1$ in \eqref{eq: test func 1} and $f_2$ in \eqref{eq: test func 2}, but the Chebyshev nodes are used instead of uniform sampling centers. All of the above computations are repeated and the results are shown in Figure \ref{fig:rates2}. These numerical results indicate that the choice of Chebyshev nodes can lead to optimal convergence rates even when the data are sampled from a function which does not satisfy the hypothese of Theorem  \ref{thm: convergence rates}.

\begin{figure}[ht]
\centering
\includegraphics[width=0.45\textwidth]{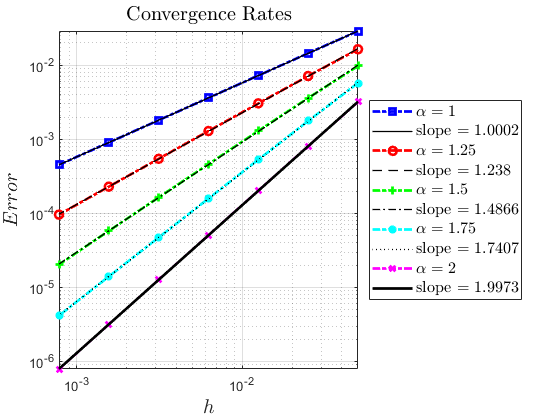}
\includegraphics[width=0.45\textwidth]{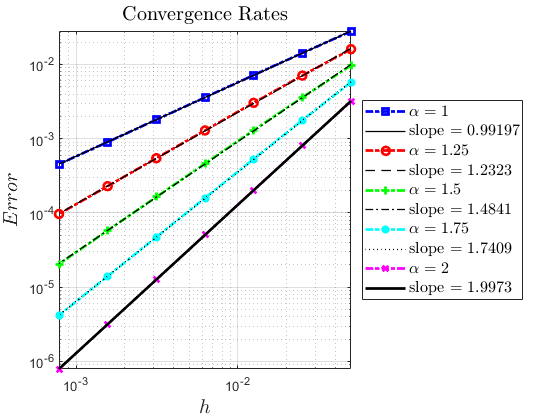}
\caption{Errors and convergence rates for left fractional Riemann-Liouville Green's Green's kernel interpolation sampled at Chebyshev points for $f_1$ (left) and $f_2$ (right).}
\label{fig:rates2}
\end{figure}

\subsection{Boundary value problems} \label{sec:example bvps}
In this section we consider two fractional boundary value problems corresponding to left and right fractional Riemann-Liouville derivatives, respectively.
\paragraph{Problem 1: Left fractional derivatives} 
\text{  }\\

Note that the left fractional Riemann-Liouville derivative of the monomial $(x-a)^\gamma$ is given by 
\[{}_a D_x^\alpha(x-a)^{\gamma} = \frac{\Gamma(\gamma +1)(x-a)^{\gamma - \alpha}}{\Gamma(\gamma +1 - \alpha)}.\]
Thus, if we let 
\[f(x) =-\frac{\Gamma(2)x^{1 - \alpha}}{\Gamma(2 - \alpha)}+ \frac{\Gamma(3)x^{2- \alpha}}{\Gamma(3 - \alpha)} \]
then, 
\[u(x) =x-x^2 \]
is the exact solution to the boundary value problem, 
\[
\left\{
\begin{array}{rcl}
     -{}_0 D_x^{\alpha} u(x) & = & f(x), \quad \text{in } (0,1) \\
      u(0) & = & u(1) = 0 \\
\end{array} 
\right. .
\]

\paragraph{Problem 2: Right fractional derivatives} 
\text{  }\\

Consider the right fractional Riemann-Liouville derivatives
\[{}_x D_1^\alpha x = \frac{\Gamma(\alpha+1)(1-x)^{- \alpha}}{\Gamma(2 - \alpha)}\left( \frac{x}{\Gamma(\alpha+1)} - \frac{1}{\Gamma(\alpha)}\right),\]
and 
\[{}_x D_1^\alpha x^2 = \frac{2\Gamma(\alpha+1)(1-x)^{- \alpha}}{\Gamma(3 - \alpha)}\left( \frac{x^2}{\Gamma(\alpha+1)} -\frac{x}{\Gamma(\alpha)} + \frac{1}{2\Gamma(\alpha-1)}\right).\]
Thus, if we let 
\begin{multline*}
f(x) =\frac{\Gamma(\alpha+1)(1-x)^{- \alpha}}{\Gamma(2 - \alpha)}\left( \frac{x}{\Gamma(\alpha+1)} - \frac{1}{\Gamma(\alpha)}\right)- \\ 
\frac{2\Gamma(\alpha+1)(1-x)^{- \alpha}}{\Gamma(3 - \alpha)}\left( \frac{x^2}{\Gamma(\alpha+1)} -\frac{x}{\Gamma(\alpha)} + \frac{1}{2\Gamma(\alpha-1)}\right), 
\end{multline*}
then, 
\[u(x) =x-x^2 \]
is the exact solution to the boundary value problem, 
\[
\left\{
\begin{array}{rcl}
     -{}_x D_1^{\alpha} u(x) & = & f(x), \quad \text{in } (0,1) \\
      u(0) & = & u(1) = 0 \\
\end{array} 
\right. .
\]

Using $N = 20,40,80,160,320$ points in $(0,1)$ and for $\alpha \in \{ 1.5, 1.75\}$, we test the Galerkin method of Section \ref{sec:frac_bvp} using the above test problems. For both problems, we sample using both uniform and Chebyshev nodes and we compute the root mean squared error \eqref{eq:rmse} by evaluating the solution at $N_{eval} = 1000$ uniform points. Convergence rates are computed using the formula
\begin{equation}\label{eq:rate}
Rate = \frac{\log(\epsilon_{h_1}/\epsilon_{h_2})}{\log(h_2/h_1)} ,
\end{equation}
where $h_1> h_2$ are two consecutive values of the fill distance $h$. In the case of uniform sampling, the error is computed on both the entire domain, $[0,1]$, and on the interior of the domain, $[0.01,0.99]$. For the Chebyshev nodes, the error is computed on the entire domain. The error and convergence rates are reported in Table \ref{tab:galerkin rates} and Table \ref{tab:galerkin rates 2} for Problem 1 and Problem 2, respectively. For Problem 1, the numerical evidence suggests that the choice of Chebyshev nodes guarantees optimal convergence rates on the entire domain, while uniform sampling centers are potentially subject to strong boundary effects. For Problem 2, the choice of sampling centers is less relevant.

\begin{table}
    \centering
    \begin{tabular}{|c|c|c||c|c|c||c|c|c|}
        \hline
        \multicolumn{9}{|c|}{$\alpha = 1.5$} \\
        \hline
        \multicolumn{3}{|c||}{Uniform} & \multicolumn{3}{|c||}{Uniform (interior) } & \multicolumn{3}{|c|}{Chebyshev}\\
        \hline
        N   & Error      & Rate       & N   & Error      & Rate       & N   & Error      & Rate \\
        \hline \hline
        20  & 3.7361e-03 & --         & 20 & 3.6477e-03  & --         & 20  & 4.1877e-03 & --     \\
         \hline
        40  & 1.3654e-03 & 1.3998     & 40 & 1.2638e-03  & 1.4740     & 40  & 1.4428e-03 & 1.4835 \\
         \hline
        80  & 5.0519e-04 & 1.4085     & 80 & 4.4054e-04  & 1.4929     & 80  & 5.0261e-04 & 1.4943 \\
         \hline
        160 & 1.8504e-04 & 1.4359     & 160 & 1.5528e-04 & 1.4909     & 160 & 1.7664e-04 & 1.4952 \\
         \hline
        320 & 6.7506e-05 & 1.4482     & 320 & 5.5581e-05 & 1.4755     & 320 & 6.2167e-05 & 1.4998 \\
        \hline
        \hline
        \multicolumn{9}{|c|}{$\alpha = 1.75$} \\
        \hline
        \multicolumn{3}{|c||}{Uniform} & \multicolumn{3}{|c||}{Uniform (interior) } & \multicolumn{3}{|c|}{Chebyshev}\\
        \hline
        N   & Error      & Rate   & N   & Error      & Rate   & N   & Error      & Rate \\
        \hline \hline
        20  & 1.4599e-03 & --     & 20 & 1.4359e-03  & --     & 20  & 2.0305e-03 & --     \\
         \hline
        40  & 4.7161e-04 & 1.5713 & 40 & 4.2605e-04  & 1.6896 & 40  & 5.8042e-04 & 1.7435 \\
         \hline
        80  & 1.5685e-04 & 1.5595 & 80 & 1.1786e-04  & 1.8205 & 80  & 1.6921e-04 & 1.7467 \\
         \hline
        160 & 5.3117e-05 & 1.5481 & 160 & 3.5078e-05 & 1.7326 & 160 & 4.9805e-05 & 1.7486 \\
         \hline
        320 & 1.7888e-05 & 1.5631 & 320 & 1.0396e-05 & 1.7466 & 320 & 1.4725e-05 & 1.7501 \\
         \hline
    \end{tabular}
  \caption{Errors and convergence rates of the kernel Galerkin method for Problem 1 (the left Riemann-Liouvile Boundary value problem) using $\alpha = 1.5$ and $\alpha = 1.75$. For each value of $\alpha$, both uniform and Chebyshev sampling nodes are used. In the case of uniform sampling, the error is computed on both the entire domain (left column) and on the interior of the domain (center column). For the Chebyshev nodes, the error is computed on the entire domain (right column).}
    \label{tab:galerkin rates}
\end{table}

\begin{table}
    \centering
    \begin{tabular}{|c|c|c||c|c|c||c|c|c|}
        \hline
        \multicolumn{9}{|c|}{$\alpha = 1.5$} \\
        \hline
        \multicolumn{3}{|c||}{Uniform} & \multicolumn{3}{|c||}{Uniform (interior) } & \multicolumn{3}{|c|}{Chebyshev}\\
        \hline
        N   & Error      & Rate       & N   & Error      & Rate       & N   & Error      & Rate \\
        \hline \hline
        20  & 1.1691e-02 & --         & 20 & 1.1803e-02  & --         & 20  & 1.1622e-02 & --     \\
         \hline
        40  & 5.0108e-03 & 1.1782     & 40 & 5.0242e-03  & 1.1877   & 40  & 4.5013e-03 & 1.3207\\
         \hline
        80  & 1.7283e-0  & 1.5080     & 80 & 1.7159e-03  & 1.5220     & 80  & 1.6973e-03 & 1.3821 \\
         \hline
        160 & 6.4719e-04 & 1.4043     & 160 & 6.5063e-04 & 1.3864     & 160 & 6.0997e-04 & 1.4632 \\
         \hline
        320 & 2.3058e-04 & 1.4822     & 320 & 2.2876e-04 & 1.5012     & 320 & 2.1954e-04 & 1.4676  \\
        \hline
        \hline
        \multicolumn{9}{|c|}{$\alpha = 1.75$} \\
        \hline
        \multicolumn{3}{|c||}{Uniform} & \multicolumn{3}{|c||}{Uniform (interior) } & \multicolumn{3}{|c|}{Chebyshev}\\
        \hline
        N   & Error      & Rate   & N   & Error      & Rate   & N   & Error      & Rate \\
        \hline \hline
        20  & 8.5524e-03 & --     & 20 & 8.6356e-03   & --     & 20  & 7.6402e-03 & --     \\
         \hline
        40  & 2.9699e-03 & 1.4708 & 40 & 2.9954e-03  & 1.4724  & 40  & 2.5964e-03 & 1.5027  \\
         \hline
        80  & 8.7020e-04 & 1.7390 & 80 & 8.7004e-04  & 1.7514  & 80  & 8.2594e-04 & 1.6230 \\
         \hline
        160 & 2.7861e-04 & 1.6283 & 160 & 2.7914e-04 & 1.6253  & 160 & 2.5246e-04 & 1.6947 \\
         \hline
        320 & 8.2125e-05 & 1.7544 & 320 & 8.1570e-05 & 1.7669  & 320 & 7.6340e-05 & 1.7178 \\
         \hline
    \end{tabular}
  \caption{Errors and convergence rates of the kernel Galerkin method for Problem 2 (the right Riemann-Liouvile Boundary value problem) using $\alpha = 1.5$ and $\alpha = 1.75$. For each value of $\alpha$, both uniform and Chebyshev sampling nodes are used. In the case of uniform sampling, the error is computed on both the entire domain (left column) and on the interior of the domain (center column). For the Chebyshev nodes, the error is computed on the entire domain (right column).}
    \label{tab:galerkin rates 2}
\end{table}

\subsection{Fractional diffusion equation in one space variable}
As a final numerical example, we consider a modified version of the benchmark problem that appears in \cite{PiretHanert}:
\begin{equation}\label{eq:diffusion}
    \frac{\partial u(x,t)}{\partial t} = d(x){}_0 D_x^{\alpha}u(x,t) + q(x,t) \quad \text{ for } x \in [0,1] \text{ and } t>0
\end{equation}
with $d(x) = 1$, 
$q(x,t) = -e^{-t}\left( \left(1 + \Gamma(2)\frac{x^{-\alpha}}{\Gamma(2-\alpha)}\right)x  -  \left(1  +  \Gamma(5)\frac{x^{-\alpha}}{\Gamma(5-\alpha)}\right)x^4 \right)$, $u(x,0) = x-x^4$, and $u(0,t) = u(1,t) = 0$ so that the exact solution to  \eqref{eq:diffusion} is
\[u(x,t) = e^{-t}(x-x^4).\]

Using $N = 20,40,80,160,320$ points in $(0,1)$ and $\alpha = 1.3, \, 1.8$, we test the Galerkin method of Section \ref{sec:frac_diff} using the test problem \eqref{eq:diffusion}. For each value of $N$ we set $[T_0, T_1] = [0,1]$ and select $N_t = N$. Moreover, for these values of $N$ and $N_T$ we expect the overall error in the numerical scheme to be $\mathcal{O}(h^\alpha)$. As in Problem 1 of Section \ref{sec:example bvps}, we sample using both uniform and Chebyshev nodes and we compute the root mean squared error \eqref{eq:rmse} at the last time step by evaluating the solution at $N_{eval} = 1000$ uniform points and calculate convergence rates using \eqref{eq:rate}. In the case of uniform sampling, the error in the final time step is computed on both the entire domain, $[0,1]$, and on the interior of the domain, $[0.01,0.99]$. For the Chebyshev nodes, the error is computed on the entire domain. The error and convergence rates are reported in Table \ref{tab:galerkin rates 3}. Once more, the numerical evidence suggests that the choice of Chebyshev nodes guarantees optimal convergence rates on the entire domain, while uniform sampling centers are potentially subject to strong boundary effects.  

\begin{table}
    \centering
    \begin{tabular}{|c|c|c||c|c|c||c|c|c|}
        \hline
        \multicolumn{9}{|c|}{$\alpha = 1.3$} \\
        \hline
        \multicolumn{3}{|c||}{Uniform} & \multicolumn{3}{|c||}{Uniform (interior) } & \multicolumn{3}{|c|}{Chebyshev}\\
        \hline
        N   & Error      & Rate       & N   & Error      & Rate       & N   & Error      & Rate \\
        \hline \hline
        20  & 5.2718e-03 & --         & 20 & 4.9678e-03  & --         & 20  & 5.6653e-03 & --     \\
         \hline
        40  & 2.1115e-03 & 1.2723 & 40 & 1.8827e-03  & 1.3492 & 40  & 2.2488e-03 & 1.2864 \\
         \hline
        80  & 8.9546e-04 & 1.2153 & 80 & 7.8723e-04  & 1.2353 & 80  & 8.8911e-04 & 1.3149 \\
         \hline
        160 & 3.6902e-04 & 1.2674 & 160 & 3.1850e-04 & 1.2937 & 160 & 3.5418e-04 & 1.3160 \\
         \hline
        320 & 1.4868e-04 & 1.3056 & 320 & 1.2965e-04 & 1.2908 & 320 & 1.4004e-04 & 1.3326 \\
        \hline
        \hline
        \multicolumn{9}{|c|}{$\alpha = 1.8$} \\
        \hline
        \multicolumn{3}{|c||}{Uniform} & \multicolumn{3}{|c||}{Uniform (interior) } & \multicolumn{3}{|c|}{Chebyshev}\\
        \hline
        N   & Error      & Rate   & N   & Error      & Rate   & N   & Error      & Rate \\
        \hline \hline
        20  & 9.4843e-04 & --     & 20 & 9.3055e-04  & --     & 20  & 1.4861e-03 & --     \\
         \hline
        40  & 3.1450e-04 & 1.5350 & 40 & 2.7441e-04  & 1.6981 & 40  & 4.0677e-04 & 1.8039 \\
         \hline
        80  & 1.0880e-04 & 1.5038 & 80 & 7.0093e-05  & 1.9335 & 80  & 1.1302e-04 & 1.8148 \\
         \hline
        160 & 3.8074e-05 & 1.5011 & 160 & 1.9642e-05 & 1.8188 & 160 & 3.1477e-05 & 1.8277 \\
         \hline
        320 & 1.3106e-05 & 1.5316 & 320 & 5.3384e-06 & 1.8710 & 320 & 8.6916e-06 & 1.8483 \\
         \hline
    \end{tabular}
    \caption{Errors and convergence rates of the kernel Galerkin method for the fractional diffusion problem using $\alpha = 1.3$ and $\alpha = 1.8$. For each value of $\alpha$, both uniform and Chebyshev sampling nodes are used. In the case of uniform sampling, the error in the final time step is computed on both the entire domain (left column) and on the interior of the domain (center column). For the Chebyshev nodes, the error is computed on the entire domain (right column).}
    \label{tab:galerkin rates 3}
\end{table}

\section{Conclusions and Future Work}

In short, the above results indicate that Riemann-Liouville Green's kernels offer a simple yet flexible means for fractional order interpolation in reproducing kernel Banach spaces. Indeed, optimal order convergence rates for the kernel interpolation problem were derived theoretically and demonstrated numerically. Moreover, the corresponding kernel Galerkin method was applied to the numerical solution of a fractional diffusion problem. Not only does the kernel Galerkin method obtain the expected convergence rates, but it does so with requiring the computation or approximation of any fractional derivatives.

Thus far, we have only considered the Green's kernels corresponding to boundary value problems of the form
\[
\left\{
\begin{array}{rcl}
     -{}_0 D_x^{\alpha} u(x) & = & f(x,u(x)), \quad \text{ in } \Omega \\
       u(0) & = & u(1) = 0 \\
\end{array} 
\right.
\]
but it is well known that, in the integer order case, an entire family of iterated Brownian bridge kernels can be generated \cite{Cavoretto}. Evidently, this would correspond to fractional operators of the following type
\[\mathcal{L} = \left(-{}_0 D_x^{\alpha} + \varepsilon^2 \mathcal{I} \right)^\beta\]
for some appropriate boundary conditions. However, to the author's knowledge, the corresponding Green's functions are still unknown \cite{Jin}. Hence, this is left for future work. Obvious extensions to higher dimensions via tensor products could also be considered.

\section*{Acknowledgements}
This research was supported by the NSF RTG grant DMS-2136228. The author would like to thank Greg Fasshauer and Yulong Li for their helpful discussions in the preparation of this manuscript as well as Michael Neunteufel and Julie Zhu for their careful readings of an early draft.

\bibliographystyle{siam} 
\bibliography{fractional_rkbs}

\end{document}